\newtheorem{theorem}{Theorem}
\newtheorem{prop}{Proposition}[section]
\newtheorem{remark}{Remark}[section]
\newcommand{\ee}{\mathrm{e}} 
\newcommand{\dd}{\mathrm{d}} 
\newcommand{\ind}{\mathds{1}}
\newcommand{\e}{\mathbb{E}}
\newcommand{\p}{\mathbb{P}}
\newcommand{\drift}{c}
\newcommand{\parisiantime}{\kappa_q}
\newcommand{\cumparisiantime}{\sigma_r}
\newcommand{\pscale}{W^{(p)}}
\newcommand{\wapq}{\mathcal W_a^{(p,q)}}
\newcommand{\cH}{\mathcal{H}}
\newcommand{\dE}{\mathbb{E}}
\newcommand{\dP}{\mathbb{P}}
\newcommand{\dy}{\mathrm{d}y}
\newcommand{\ABS}[1]{{\left| #1 \right|}} 
\newcommand{\PAR}[1]{{\left(#1\right)}} 
\newcommand{\SBRA}[1]{{\left[#1\right]}} 
\newcommand{\BRA}[1]{{\left\{#1\right\}}} 
\begin{document}

\title[]{On the distribution of cumulative Parisian ruin}

\author[]{H\'el\`ene Gu\'erin}
\address{IRMAR, Universit\'e de Rennes 1, Campus de Beaulieu 35042 Rennes Cedex France}
\email{helene.guerin@univ-rennes1.fr}

\author[]{Jean-Fran\c{c}ois Renaud}
\address{D\'epartement de math\'ematiques, Universit\'e du Qu\'ebec \`a Montr\'eal (UQAM), 201 av.\ Pr\'esident-Kennedy, Montr\'eal (Qu\'ebec) H2X 3Y7, Canada}
\email{renaud.jf@uqam.ca}

\date{\today}

\keywords{Cumulative Parisian ruin, occupation time, Cram\'er-Lundberg model, finite-time ruin.}
\subjclass[2000]{}

\begin{abstract}
We introduce the concept of cumulative Parisian ruin, which is based on the time spent in the red by the underlying surplus process. Our main result is an explicit representation for the distribution of the occupation time, over a finite-time horizon, for a compound Poisson process with drift and exponential claims. The Brownian ruin model is also studied in details. Finally, we analyze for a general framework the relationships between cumulative Parisian ruin and classical ruin, as well as with Parisian ruin based on exponential implementation delays. 
\end{abstract}

\maketitle

\section{Introduction}

Historically, in classical actuarial ruin theory, the central topic has been the analysis of the probability of ruin, which is a measure to assess the overall risk of an insurance company. Ruin occurs when the surplus process falls below a certain threshold level for the first time, e.g.\ the insurer's minimum capital requirement set by the regulatory body. Then, more sophisticated ruin-based risk measures using the timing and the severity of such a capital shortfall have been proposed, culminating with so-called Gerber-Shiu distributions; see \cites{gerbershiu1998,kyprianou2013}.

However, as discussed in \cite{dosreis1993}: "\textit{[...] sometimes the event ruin has a very small probability and the portfolio is just one out of many existing in the company. The company can have enough funds available to support (or ask for external support) some negative surplus for some time, in the hope that the portfolio will recover in the future, allowing the company to keep this business alive. This can be regarded as an investment, since the process will recover in the future.}" This is why, in recent years, new ruin models, incorporating this realistic feature that a company is not immediately liquidated when it defaults, have been proposed. Therefore, new risk measures and models have been introduced: Parisian ruin (see, e.g., \cites{czarnapalmowski2010,loeffenetal2013,landriaultetal2014,baurdoux_et_al_2015}), random observations (see, e.g., \cites{albrecheretal2011a,albrecheretal2013}) and Omega models (see, e.g., \cite{albrecheretal2011b}, \cite{gerber_et_al_2012} and \cite{albrecher_lautscham_2013}). In those papers, new definitions of \textit{ruin}, \textit{bankruptcy} and/or \textit{liquidation} are proposed and studied.

In this direction, we will introduce the concept of cumulative Parisian ruin, which is based on the time spent in the red by the underlying surplus process. While Parisian ruin has been inspired by Parisian options, the idea of cumulative Parisian ruin comes from cumulative Parisian options, another type of occupation-time options. In the definition of Parisian ruin (the duration of) periods of financial distress are considered separately, while cumulative Parisian ruin considers the aggregation of those durations. As expected, this notion of ruin is closely related to the occupation-time process of the underlying surplus process in the model. We will also analyze the relationships between cumulative Parisian ruin and classical ruin, as well as with Parisian ruin based on exponential implementation delays. 

\subsection{Insurance risk models}

In this paper, we will mainly consider the following two classical actuarial ruin models: the Cram\'er-Lundberg model with exponential claims and the Brownian ruin model. First, in the Cram\'er-Lundberg model, the surplus of an insurance company/portfolio is modelled by a compound Poisson process with drift $X^\text{CL} = \{X^\text{CL}_t, t \geq 0\}$, that is
\begin{equation}\label{E:cramerlundberg}
X^\text{CL}_t = X^\text{CL}_0 + \drift t - \sum_{i=1}^{N_t} C_i ,
\end{equation}
where $\drift>0$ denotes the constant premium rate, $N = \{N_t, t \geq 0\}$ is a Poisson process with rate $\lambda>0$ and the $C_i$'s form a sequence of independent and identically distributed random variables (also independent of $N$) representing the claim amounts; each $C_i$ is assumed to follow an exponential distribution with rate $\alpha>0$. In this model, the random variable $\sum_{i=1}^{N_t} C_i$ represents the aggregate claim payments made up to time $t$. 

On the other hand, in the Brownian ruin model, the surplus of the insurance company is modelled by a Brownian motion with drift $X^\text{BM} = \{X^\text{BM}_t, t \geq 0\}$, that is
\begin{equation}\label{E:Brownian}
X^\text{BM}_t = X^\text{BM}_0 + \drift t + \sigma B_t ,
\end{equation}
where $\drift>0$ denotes the constant premium rate, $\sigma>0$ and $B = \{B_t, t \geq 0\}$ is a standard Brownian motion.

Note that both surplus processes $X^\text{CL}$ and $X^\text{BM}$ are special cases of a spectrally negative L\'{e}vy process (SNLP), also known as a general L\'evy insurance risk process, that is a càdlàg process $X = \{X_t, t \geq 0\}$ with stationary and independent increments and no positive jumps. There is a rich literature on this family of stochastic processes for tackling \textit{classical} ruin problems; see e.g.\ \cites{kyprianou2013,kyprianou2014}.

\subsection{Definitions of ruin and bankruptcy}

Let $X=\{X_t, t\geq 0\}$ be a surplus process. In what follows, the law of $X$ with initial capital $X_0 = x$ is denoted by $\p_x$ and the corresponding expectation by $\e_x$. We write $\p$ and $\e$ when $x=0$.

The time of \textit{classical} ruin is defined as
$$
\tau_0^- = \inf \left\lbrace t > 0 \colon X_t < 0 \right\rbrace ,
$$
yielding the classical ruin-based risk measure $\p_x \left( \tau_0^- < \infty \right)$, i.e.\ the probability of ruin. From a practical point of view, it is more interesting to consider the finite-time probability of ruin $\p_x \left( \tau_0^- \leq t \right)$, where $t>0$ is a fixed deterministic time corresponding to the time horizon of interest for the risk managers.

In the last few years, in order to distinguish between default and bankruptcy, several concepts of \textit{ruin} have been proposed. In particular, the idea of Parisian ruin has attracted a lot of attention; it considers the application of an implementation delay in the recognition of an insurer's capital insufficiency. More precisely, it assumes that Parisian ruin occurs if the excursion below the critical threshold level $0$ is longer than a pre-determined time called the implementation delay, or the \textit{clock}. Two types of Parisian ruin have been considered: with deterministic delays or with stochastic delays.

First, in \cites{czarnapalmowski2010, loeffenetal2013}, a Parisian ruin time (with a deterministic delay $r>0$) is defined as
$$
\tau_r = \inf \left\lbrace t > 0 \colon t - g_t > r \right\rbrace ,
$$
where $g_t = \sup \left\lbrace 0 \leq s \leq t \colon X_s \geq 0 \right\rbrace$, and studied for L\'evy insurance risk models. In other words, the company is said to be \textit{Parisian ruined} the first time an excursion below zero lasts longer than the fixed implementation delay $r$. We will call this \textit{deterministic Parisian ruin}, where \textit{deterministic} refers to the implementation delay.

On the other hand, in \cites{landriaultetal2011,landriaultetal2014,baurdoux_et_al_2015}, exponentially distributed delays have been considered. A descriptive definition of Parisian ruin with exponential delay is given in \cite{baurdoux_et_al_2015}, for spectrally negative L\'evy processes. The idea is to mark the Poisson point process of excursions away from $0$ by independent copies of a generic exponential random variable $\mathbf{e}_q$ with rate $q>0$. Then, the Parisian ruin time is defined as
$$
\parisiantime = \inf \left\lbrace t > 0 \colon t - g_t > \mathbf{e}_q^{g_t} \right\rbrace ,
$$
where $g_t = \sup \left\lbrace 0 \leq s \leq t \colon X_s \geq 0 \right\rbrace$ and where each random variable $\mathbf{e}_q^{g_t}$ is exponentially distributed with rate $q>0$. We will call this \textit{exponential Parisian ruin}.

Note that for the two definitions of Parisian ruin mentioned above, there is a different clock for each excursion, either deterministic or stochastic.

\begin{remark}
Note that the definition of exponential Parisian ruin corresponds to a specific choice of function $\omega(\cdot)$ in an Omega model; see \cites{gerber_et_al_2012,albrecher_lautscham_2013,loeffenetal2014}.
\end{remark}

The rest of the paper is organized as follows. In Section 2, after introducing the concept of cumulative Parisian ruin, we derive our main results. More precisely, in Section 2.1 and in Section 2.2, we obtain the distribution of cumulative Parisian ruin when the underlying surplus process is a Brownian motion risk process and a Cramér-Lundberg process with exponential claims, respectively. Section 2.3 is devoted to the study of the relationships between cumulative Parisian and classical ruin, as well as with exponential Parisian ruin.

\section{Cumulative Parisian ruin}

We now propose a new definition of actuarial ruin based on the occupation-time process of the surplus process $X$, namely the process
$$
t \mapsto \int_0^t \ind_{(-\infty,0)} (X_s) \mathrm{d}s .
$$
We define the time of cumulative Parisian ruin (at level $r>0$) by
\begin{equation}
\cumparisiantime = \inf \left\lbrace t > 0 \colon \int_0^t \ind_{(-\infty,0)} (X_s) \mathrm{d}s > r \right\rbrace .
\end{equation}

\begin{remark}
Note that for cumulative Parisian ruin, the parameter $r$ could be interpreted as the length of a clock started at the beginning of the first excursion, paused when the process returns above zero, and resumed at the beginning of the next excursion, and so on. In other words, it is the length of this single clock that is compared to the sum of all excursions below zero, as opposed to the previous definitions of Parisian ruin where a new clock is started at the beginning of each excursion.
\end{remark}

\begin{remark}
This definition of ruin is referred to as cumulative Parisian ruin due to its ties with cumulative Parisian options; see e.g.\ \cite{hugonnier_1999}. Note also that this definition of bankruptcy has been used as a definition of default in structural credit risk models; see \cites{yildirim_2006,makarov_et_al_2015}.
\end{remark}

We are interested in the following new ruin-based risk measure
$$
\p_x \left( \cumparisiantime \leq t \right) ,
$$
where $t>0$ is a fixed deterministic time. Clearly, for fixed values $t,r>0$, we have
$$
\left\lbrace \cumparisiantime > t \right\rbrace = \left\lbrace \int_0^t \ind_{(-\infty,0)} (X_s) \mathrm{d}s \leq r \right\rbrace ,
$$
so, the finite-time probability of cumulative Parisian ruin is given by
\begin{align}
\p_x \left( \cumparisiantime \leq t \right) &= 1 - \p_x \left( \int_0^t \ind_{(-\infty,0)} (X_s) \mathrm{d}s \leq r \right) \notag\\
&= 1 - \int_0^{r} \p_x \left( \int_0^t \ind_{(-\infty,0)} (X_s) \mathrm{d}s \in \mathrm{d}u \right) .\label{E:link_cumparisian_occ-time}
\end{align}
Then, we also have
\begin{align*}
\p_x \left( \cumparisiantime < \infty \right) &= 1 - \p_x \left( \int_0^\infty \ind_{(-\infty,0)} (X_s) \mathrm{d}s \leq r \right) \\
&= 1 - \int_0^r \p_x \left( \int_0^\infty \ind_{(-\infty,0)} (X_s) \mathrm{d}s \in \mathrm{d}u \right) .
\end{align*}


%

As the distribution of cumulative Parisian ruin is closely related to the distribution of the occupation-time process associated with the underlying surplus process $X$ over a finite-time horizon, we will now study the latter for the classical surplus processes. While there is a vast literature on the analysis of occupation times related to Brownian motion, it turns out that little is known about the distribution of the occupation-time process for a compound Poisson process with drift over a finite-time horizon.

Of course, it is very difficult to obtain analytical expressions for the probability of finite-time cumulative Parisian ruin for a general insurance risk process. However, there is a quite simple expression of its Laplace transform when the initial capital is zero in the case of spectrally negative Lévy processes.
\begin{prop}\label{P:occ-time0}
If $X$ is a SNLP such that $\e [X_1] = \psi'(0+) > 0$, then, for $p,q > 0$,
\begin{equation}\label{E:occ-time_0}
\int_0^\infty \mathrm{e}^{-p t} \e \left[ \mathrm{e}^{- q \int_0^{t} \ind_{(-\infty,0)} (X_s) \mathrm{d}s } \right] \mathrm{d}t = \frac{\Phi(p+q)}{p+q} \frac{1}{\Phi(p)} ,
\end{equation}
where $\Phi(q)$ is the largest root of Lundberg's equation $\psi(\theta)-q=0$, for a fixed $q>0$, and where $\psi(\theta) := \ln \left( \e \left[ \mathrm{e}^{\theta X_1} \right] \right)$.
\end{prop}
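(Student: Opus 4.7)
The plan is to convert the time-Laplace transform into an expectation at an independent exponential time, and then invoke the standard fluctuation theory for spectrally negative L\'evy processes. Let $\mathbf{e}_p$ denote an exponential random variable with rate $p$, independent of $X$. By Fubini's theorem the left-hand side of \eqref{E:occ-time_0} equals
$$
\frac{1}{p}\, \e_0\!\left[\mathrm{e}^{-q \int_0^{\mathbf{e}_p} \ind_{(-\infty,0)}(X_s)\, \mathrm{d}s}\right],
$$
so it suffices to show that this expectation equals $p\,\Phi(p+q)/\bigl((p+q)\,\Phi(p)\bigr)$.

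For the main computation I would use the excursion theory of $X$ reflected at its running supremum $\overline{X}$. Since $\psi'(0+)>0$ one has $\Phi(0)=0$ and, by the Wiener--Hopf factorization, $\overline{X}_{\mathbf{e}_p}$ is exponentially distributed with rate $\Phi(p)$. The downward excursions of $\overline{X}-X$ away from $0$ form a Poisson point process indexed by the local time at the supremum; within an excursion that begins when the supremum has reached some level $y>0$, the portion of the excursion spent at reflected depth exceeding $y$ coincides exactly with the portion of time during which $X<0$. Applying the compensation formula for Poisson point processes to the joint Laplace transform in $\mathbf{e}_p$ and $A_{\mathbf{e}_p}:=\int_0^{\mathbf{e}_p}\ind_{(-\infty,0)}(X_s)\,\mathrm{d}s$, and using the classical identity $n(1-\mathrm{e}^{-p\zeta})=\Phi(p)$ for the excursion-lifetime measure $n$, should yield the ratio $\Phi(p+q)/\Phi(p)$ after a direct calculation; the factor $1/(p+q)$ arises from the final (straddling) excursion truncated by the independent exponential clock.

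The main obstacle I expect is bookkeeping around the starting point $x=0$ in the unbounded-variation case, where the excursion measure and the local-time normalization must be handled carefully. A cleaner alternative would be to start from the occupation-time identity for SNLPs already available in the literature (expressing the analogous expectation with a general starting point $x$ in terms of the scale functions $W^{(p)}$ and $W^{(p+q)}$) and then specialize to $x=0$: at this value most of the scale-function terms collapse, and combining $\int_0^\infty \mathrm{e}^{-\Phi(p+q) y}\,\mathrm{d}y = 1/\Phi(p+q)$ with the defining relations $\psi(\Phi(p))=p$ and $\psi(\Phi(p+q))=p+q$ should reduce the identity to an elementary algebraic simplification, without any reliance on excursion-theoretic technology.
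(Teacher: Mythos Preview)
The paper does not actually prove this proposition: its entire proof reads ``See e.g.\ Remark 4.1 in \cite{landriaultetal2014}.'' Your second suggested route---take an existing occupation-time identity for SNLPs with general starting point and specialize to $x=0$---is therefore exactly in the spirit of what the paper does, namely defer to a known result in the fluctuation-theory literature.

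Your first route, via excursion theory at the running supremum, is a genuinely different argument. The ingredients you list are correct (Wiener--Hopf gives $\overline{X}_{\mathbf{e}_p}\sim\mathrm{Exp}(\Phi(p))$; during an excursion of $\overline{X}-X$ started when the supremum is at level $y$, the time with $X<0$ is the time the excursion spends above height $y$), but as written this is only a sketch. The functional you need to average under the excursion measure depends on the local-time index $y$ of the excursion, so the compensation formula does not reduce to a simple product; you would have to integrate over $y$ and identify the resulting expression with $\Phi(p+q)/\Phi(p)$. Likewise, attributing the factor $1/(p+q)$ to the straddling excursion is plausible heuristically but would need an explicit last-exit or marked-PPP computation to justify. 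None of this is wrong in principle, but it is considerably more work than your write-up suggests, and the paper sidesteps all of it by citation.
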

\begin{proof}
See e.g.\ Remark 4.1 in \cite{landriaultetal2014}.
\end{proof}

Since
$$
\int_0^\infty \mathrm{e}^{-p t} \e \left[ \mathrm{e}^{- q \int_0^{t} \ind_{(-\infty,0)} (X_s) \mathrm{d}s } \right] \mathrm{d}t = \int_0^{\infty} \int_0^{\infty} \ee^{-pt-qu} \p \left( \int_0^t \ind_{(-\infty,0)} (X_s) \mathrm{d}s \in \mathrm{d}u \right) \mathrm{d}t ,
$$
we obtain the distribution of the occupation time by inverting this double Laplace transform, which is our objective for the two main ruin models considered in this paper.
\subsection{Brownian ruin model}

One of Paul L\'evy's arcsine laws gives the distribution of the occupation time of the positive/negative half-line for a standard Brownian motion $B = \{B_t, t \geq 0\}$. More precisely, for $0<s<t$,
$$
\p \left( \int_0^t \ind_{(-\infty,0)} (B_u) \mathrm{d}u \leq s \right) = \frac{2}{\pi} \arcsin \left( \sqrt{\frac{s}{t}} \right) .
$$
This result was then extended to a Brownian motion with drift by Akahori \cite{akahori1995} and Tak\'acs \cite{takacs1996}.

Recall that in the Brownian ruin model of Equation~\eqref{E:Brownian}, the surplus is given by
$$
X^\text{BM}_t = X^\text{BM}_0 + \drift t + \sigma B_t ,
$$
where $\drift>0$ and $\sigma>0$. In this case, $\psi(\theta)=c\theta +{\sigma^2\over 2}\theta^2$, with right-inverse $\Phi(q)={1\over \sigma^2}\PAR{\sqrt{c^2+2\sigma^2 q}-c}$.

From Proposition~\ref{P:occ-time0}, the double Laplace transform of the cumulative Parisian ruin $\sigma_{r}$ is given by, for $p,q> 0$,
\begin{align*}
\int_0^\infty \mathrm{e}^{-p t} \e \left[ \mathrm{e}^{- q \int_0^{t} \ind_{(-\infty,0)} (X^\text{BM}_s) \mathrm{d}s } \right] \mathrm{d}t &= \frac{\sqrt{c^2+2\sigma^2 (p+q)}-c}{p+q} \frac{1}{\sqrt{c^2+2\sigma^2 p}-c}\\
&= \frac{1}{2\sigma^2p(p+q)} \PAR{\sqrt{c^2+2\sigma^2 (p+q)}-c}\PAR{\sqrt{c^2+2\sigma^2 p}+c}
\end{align*}

We recognize (1.4) in \cite{akahori1995} (given for the case $\sigma=1$) and then, by Laplace inversion, we recover the generalized arcsine law of the occupation time of the negative half-line for a Brownian motion with drift.

\begin{theorem}[Akahori (1995) \cite{akahori1995}]
For a fixed $t>0$, the distribution of the occupation time of the negative half-line, when $X^\text{BM}_0=0$, is given by
\begin{multline*}
\p \left( \int_0^t \ind_{(-\infty,0)} (X^\text{BM}_u) \mathrm{d}u \in \mathrm{d}s \right) = \frac{2}{\sigma^2} \left\lbrace \frac{\sigma \mathrm{e}^{-(c^2/2\sigma^2)s}}{\sqrt{2 \pi s}} - c \overline{N} \left( \frac{c \sqrt{s}}{\sigma} \right) \right\rbrace \\
\times \left\lbrace c + \frac{\sigma \mathrm{e}^{-(c^2/2\sigma^2)(t-s)}}{\sqrt{2 \pi(t-s)}} - c \overline{N} \left( \frac{c \sqrt{t-s}}{\sigma} \right) \right\rbrace \mathrm{d}s ,
\end{multline*}
for $0 \leq s \leq t$, where $\overline{N}$ denotes the tail of the standard normal distribution, that is
$$
\overline{N}(x) = \int_x^\infty \frac{\mathrm{e}^{-y^2/2}}{\sqrt{2 \pi}} \mathrm{d}y .
$$
\end{theorem}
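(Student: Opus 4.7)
The plan is to invert the double Laplace transform obtained just before the statement and read off the density of the occupation time directly.

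The first step is a factorization: I would rewrite the expression as
\begin{equation*}
\frac{1}{2\sigma^2}\cdot\frac{\sqrt{c^2+2\sigma^2(p+q)}-c}{p+q}\cdot\frac{\sqrt{c^2+2\sigma^2 p}+c}{p},
\end{equation*}
so that the $q$-dependence sits only in the combination $p+q$. Writing this as $\frac{1}{2\sigma^2}G(p+q)H(p)$, a routine change of variables $u=t-s$ in the double integral shows that any such product is the double Laplace transform (first in $t$, then in $s$) of $\frac{1}{2\sigma^2}g(s)h(t-s)\,\mathrm{d}s\,\mathrm{d}t$ supported on $\{0\leq s\leq t\}$, where $g$ and $h$ are the inverse Laplace transforms of $G$ and $H$ respectively. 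By uniqueness of Laplace inversion, the density of the occupation time must be of this form, and it remains to identify $g$ and $h$.

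For this I would rely on two standard tables. First, $\int_0^\infty e^{-pt}(e^{-bt}/\sqrt{\pi t})\,\mathrm{d}t=1/\sqrt{p+b}$ handles the Gaussian piece. Second, integration by parts after substituting $y=a\sqrt{t}$ gives $\int_0^\infty e^{-pt}\,\overline{N}(a\sqrt{t})\,\mathrm{d}t=(1-a/\sqrt{2p+a^2})/(2p)$. Combining these, I would verify that the bracket
\begin{equation*}
\frac{\sigma e^{-(c^2/2\sigma^2)s}}{\sqrt{2\pi s}}-c\,\overline{N}\!\left(\frac{c\sqrt{s}}{\sigma}\right)
\end{equation*}
has Laplace transform $(\sqrt{c^2+2\sigma^2 p}-c)/(2p)$, so $g$ equals twice this expression. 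Adding the constant $c$ inside the bracket (whose Laplace transform $c/p$ precisely flips the sign of $-c$ in the numerator) yields $h(u)$ as twice the second bracket in the theorem. The remaining prefactor $\frac{1}{2\sigma^2}\cdot 2\cdot 2=\frac{2}{\sigma^2}$ then matches the statement exactly.

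The main obstacle in this plan is the telescoping arithmetic in the previous step: the Gaussian contribution $\sigma^2/\sqrt{2\sigma^2 p+c^2}$, the term $c/(2p)$ from $\overline{N}$, and the cross term $c^2/(2p\sqrt{2\sigma^2 p+c^2})$ must combine so that the two square-root pieces collapse into $\sqrt{c^2+2\sigma^2 p}/(2p)$. Everything else — the factorization, the convolution reading of $G(p+q)H(p)$, and the appeal to uniqueness for continuous integrable densities on $\{0<s<t\}$ — is routine.
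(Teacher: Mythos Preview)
Your proposal is correct and follows the same approach as the paper, namely inverting the double Laplace transform derived from Proposition~\ref{P:occ-time0}; the paper, however, does not carry out the inversion itself but simply recognizes the transform as equation (1.4) of \cite{akahori1995} and cites Akahori's result, whereas you supply the actual factorization $G(p+q)H(p)$ and the identification of the two brackets via elementary Laplace tables. One small wording issue: the density you obtain is integrable on $(0,t)$ but not continuous (it blows up like $s^{-1/2}$ and $(t-s)^{-1/2}$ at the endpoints), so your final appeal should be to uniqueness of Laplace transforms of measures (or of locally integrable functions) rather than of ``continuous integrable densities''; this does not affect the argument.
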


Clearly, using Equation~\eqref{E:link_cumparisian_occ-time}, we have an explicit expression for $\p \left( \cumparisiantime \leq t \right)$ in the Brownian model. Using the continuity of Brownian motion, we can easily obtain an explicit expression for $\p_x \left( \cumparisiantime \leq t \right)$, i.e.\ when $X^\text{BM}_0=x>0$. Indeed, using the strong Markov property and the continuity of $X^\text{BM}$, we easily deduce that, for $s \leq t$,
\begin{multline*}
\p_x \left( \int_0^t \ind_{(-\infty,0)} (X^\text{BM}_u) \mathrm{d}u \leq s \right) \\
= \p_x \left( \tau_0^- > t-s \right) + \int_0^{t-s} \p_x \left( \tau_0^- \in \mathrm{d}u \right) \p \left( \int_0^{t-u} \ind_{(-\infty,0)} (X^\text{BM}_v) \mathrm{d}v \leq s \right) \mathrm{d}u ,
\end{multline*}
where it is well-known (see e.g.\ \cite{karatzasshreve1991}) that
$$
\p_x \left( \tau_0^- \in \mathrm{d}u \right) = \frac{x}{\sqrt{2 \pi \sigma^2 u^3}} \exp \left\lbrace - \frac{\left( x-cu \right)^2}{2 \sigma^2 u} \right\rbrace \mathrm{d}u .
$$
From the above, we easily deduce the following expression for the probability of cumulative Parisian ruin:
\begin{multline*}
\p_x \left( \cumparisiantime \leq t \right) = 1 - \p_x \left( \int_0^t \ind_{(-\infty,0)} (X^\text{BM}_u) \mathrm{d}u \leq r \right) \\
= \int_0^{t-r} \p \left( \int_0^{t-u} \ind_{(-\infty,0)} (X^\text{BM}_s) \mathrm{d}s > r \right) \frac{x}{\sqrt{2 \pi \sigma^2 u^3}} \exp \left\lbrace - \frac{\left( x-cu \right)^2}{2 \sigma^2 u} \right\rbrace \mathrm{d}u .
\end{multline*}

\subsection{Cram\'er-Lundberg model with exponential claims}

Recall that in the Cram\'er-Lundberg model with exponential claims introduced in Equation~\eqref{E:cramerlundberg}, the surplus is given by
$$
X^\text{CL}_t = X^\text{CL}_0 + \drift t - \sum_{i=1}^{N_t} C_i ,
$$
where $\drift>0$, $N = \{N_t, t \geq 0\}$ is a Poisson process with rate $\lambda>0$ and the $C_i$'s form a sequence of independent and identically distributed random variables following an exponential distribution with rate $\alpha>0$.

In this model, for $p\geq 0$, Lundberg's equation takes the form $\psi(\theta)-p=0$, where $\psi(\theta)= \drift \theta - {\lambda \theta \over \alpha+\theta}$, and its solutions are given by
\begin{align*}
\Phi(p)&={1\over2c}\PAR{p+\lambda-\drift \alpha+\sqrt{\Delta_p}} ,\\
\theta(p)&= {1\over2c}\PAR{p+\lambda-c\alpha -\sqrt{\Delta_p}} ,
\end{align*}
where $\Delta_p=(p+\lambda-c\alpha)^2+4c\alpha p$. Note that we can also write $\Delta_p=(p+\lambda+c\alpha)^2-4c\alpha \lambda$.

Clearly, for a fixed $t>0$, the distribution
\[
\p \left( \int_0^t \ind_{(-\infty,0)} (X^\text{CL}_u) \mathrm{d}u \in \mathrm{d}s \right)
\]
has a mass $a_t$ at $0$, which is the survival probability $\p \left( \tau_0^- > t \right)$. We now derive from Proposition~\ref{P:occ-time0} an expression for this distribution.
\begin{prop}\label{coro:Poisson}
For a fixed $t>0$, we have
\begin{equation}\label{E:occ-time_dist_cramer-lundberg}
\p \left( \int_0^t \ind_{(-\infty,0)} (X^\text{CL}_u) \mathrm{d}u \in \mathrm{d}s \right) = a_t \delta_0 \left( \mathrm{d}s \right) + a_{t-s} \left( \lambda - c \alpha (1-a_s) \right) \ind_{(0,t)} (s) \mathrm{d}s ,
\end{equation}
with
\begin{equation}\label{E:survival_proba_expclaims}
a_t= \left( 1-{\lambda\over c\alpha } \right)_+ + {2\lambda \over \pi} \ee^{-\PAR{\lambda+c\alpha}t}\int_{-1}^{1}{\sqrt{1-u^2}\over \lambda+c\alpha+2\sqrt{c\alpha \lambda}\, u} \ee^{-2\sqrt{c\alpha \lambda}\, t\, u} \mathrm{d}u,
\end{equation}
and $z_+:=\max(z,0)$.
%
\end{prop}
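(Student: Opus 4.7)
The plan is to start from the double Laplace transform $\Phi(p+q)/[(p+q)\Phi(p)]$ of Proposition~\ref{P:occ-time0} and split it algebraically into two pieces that will be recognized as the double Laplace transforms of, respectively, the atom $a_t\,\delta_0$ and the density on $(0,t)$ appearing in \eqref{E:occ-time_dist_cramer-lundberg}. The explicit formula \eqref{E:survival_proba_expclaims} for $a_t$ will then follow from inverting the one-dimensional Laplace transform $L(p):=1/(c\Phi(p))$ by Bromwich integration around a branch cut.

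First, I would establish that $L(p) = \int_0^\infty \mathrm{e}^{-pt} a_t\,\mathrm{d}t$. Since the Cram\'er--Lundberg process has finite variation with drift $c$, the scale function satisfies $\pscale(0)=1/c$, and the classical first-passage identity for SNLPs gives $\e_0[\mathrm{e}^{-p\tau_0^-}] = 1 - p/(c\Phi(p))$, so that $L(p) = (1-\e_0[\mathrm{e}^{-p\tau_0^-}])/p = 1/(c\Phi(p))$. Next, using Lundberg's equation $c\Phi(p+q) - \lambda\Phi(p+q)/(\alpha+\Phi(p+q)) = p+q$, a short manipulation yields
\[
\frac{c\Phi(p+q)}{p+q} = 1 + \frac{\lambda-c\alpha}{p+q} + \frac{\alpha}{\Phi(p+q)},
\]
so that
\[
\frac{\Phi(p+q)}{(p+q)\Phi(p)} \;=\; L(p) \;+\; L(p)\,\frac{\lambda-c\alpha}{p+q} \;+\; c\alpha\, L(p)\,L(p+q).
\]
The first summand is the double Laplace transform of $a_t\,\delta_0(\mathrm{d}s)$. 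For the remaining two, the change of variable $T:=t+s$, $S:=s$ in the product of one-dimensional Laplace transforms identifies them with the double Laplace transform of $a_{t-s}\bigl(\lambda - c\alpha(1-a_s)\bigr)\ind_{(0,t)}(s)\,\mathrm{d}s$. Uniqueness of the Laplace transform in $(p,q)$ then yields \eqref{E:occ-time_dist_cramer-lundberg}.

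To obtain \eqref{E:survival_proba_expclaims}, I would invert $L(p) = 1/(c\Phi(p))$. Rationalizing,
\[
L(p) = \frac{\sqrt{\Delta_p} - (p+\lambda-c\alpha)}{2c\alpha\, p},
\]
which is meromorphic on $\mathbb{C}$ cut along $[p_-,p_+]$ with $p_\pm = -(\sqrt{\lambda}\mp\sqrt{c\alpha})^2$, and has at most a simple pole at $p=0$. A Bromwich inversion deforming the contour into a keyhole around the cut gives two contributions. The residue at $0$ equals $(\sqrt{\Delta_0}-(\lambda-c\alpha))/(2c\alpha) = (1-\lambda/(c\alpha))_+$, since $\sqrt{\Delta_0}=|c\alpha-\lambda|$. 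The jump of $\sqrt{\Delta_p}$ across the cut, after the parametrization $p = -(\lambda+c\alpha+2\sqrt{c\alpha\lambda}\,u)$ with $u\in[-1,1]$ (so that $\sqrt{-\Delta_p} = 2\sqrt{c\alpha\lambda(1-u^2)}$), reproduces exactly the integral in \eqref{E:survival_proba_expclaims}.

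The main obstacle is this branch-cut inversion: one must carefully track the orientation of the keyhole contour and the sign of $\sqrt{\Delta_p}$ on the two sides of the cut. Once this is pinned down, everything else reduces to standard algebra from Lundberg's equation together with a Fubini-based convolution identity for Laplace transforms.
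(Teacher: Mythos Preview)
Your decomposition of the double Laplace transform is essentially the same as the paper's: the identity
\[
\frac{c\Phi(p+q)}{p+q}=1+\frac{\lambda-c\alpha}{p+q}+\frac{\alpha}{\Phi(p+q)}
\]
is exactly the one the paper uses (its equation \eqref{eq:TLh}), and the identification of the density via the convolution structure $\mathcal{L}f(p)\,\mathcal{L}g(p+q)$ is what the paper calls ``basic properties of Laplace transforms''. So on this part there is no real difference.

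The genuine divergence is in the inversion of $L(p)=1/(c\Phi(p))$. The paper rewrites $L(p)$ as $\frac{1}{p}-\frac{2\lambda}{p\bigl(\sqrt{(p+\lambda+c\alpha)^2-4c\alpha\lambda}+(p+\lambda+c\alpha)\bigr)}$, recognizes the second term from a table as $\frac{1}{p}\int_0^p \mathcal{L}g(q)\,\mathrm{d}q$ with $g$ a Bessel $I_1$ function, uses the rule $f(t)=\int_t^\infty s^{-1}g(s)\,\mathrm{d}s$, and then inserts the integral representation $I_1(s)=\frac{s}{\pi}\int_{-1}^1 e^{-su}\sqrt{1-u^2}\,\mathrm{d}u$ together with Fubini to arrive at \eqref{E:survival_proba_expclaims}. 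Your Bromwich/keyhole argument around the cut $[p_-,p_+]$ with the parametrization $p=-(\lambda+c\alpha+2\sqrt{c\alpha\lambda}\,u)$ is a correct alternative that reaches the same integral directly, without passing through Bessel functions: the residue at $0$ gives $(1-\lambda/(c\alpha))_+$ and the branch-cut contribution gives the $\frac{2\lambda}{\pi}$ integral. The complex-analytic route is arguably cleaner here since it bypasses the Bessel table identity and the auxiliary integration rule, at the cost of having to track signs across the cut; the paper's route has the advantage of keeping everything real and making the connection to $I_1$ explicit, which is reused later in Theorem~\ref{main_result}.
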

Note that there exists other equivalent expressions for $a_t = \p \left( \tau_0^- > t \right)$ in the Cram\'er-Lundberg model with exponential claims. See e.g.\ \cite{drekic2009}, or \cite[Proposition 1.3]{asmussen_albrecher_2010} and the references therein.

\begin{proof}
For simplicity, we will use $X$ instead of $X^\text{CL}$ in the proof. From Proposition~\ref{P:occ-time0}, we know that the double the Laplace transform of the occupation time of the negative half-line is equal to
\begin{align}\label{eq:LTPoisson}
\int_0^\infty \mathrm{e}^{-p t} \e \left[ \mathrm{e}^{- q \int_0^{t} \ind_{(-\infty,0)} (X_s) \mathrm{d}s } \right] \mathrm{d}t = \frac{\Phi(p+q)}{p+q} \frac{1}{\Phi(p)} .
\end{align}
Since $\Phi(v)/v$ converges to $1/c$ when $v$ goes to $\infty$, we deduce that~\eqref{eq:LTPoisson} converges to $\PAR{c\Phi(p)}^{-1}$ when $q$ goes to $\infty$. As mentioned above, this is due to the fact that $\int_0^t \ind_{(-\infty,0)} (X_u) \mathrm{d}u$ has a mass at $0$, which means that its distribution under $\p$ can be written in the following way:
\[
\p \left( \int_0^t \ind_{(-\infty,0)} (X_u) \mathrm{d}u \in \mathrm{d}s \right) = a_t \delta_0(\mathrm{d}s) + h_t(\mathrm{d}s) ,
\]
where $a_t \in [0,1]$ and $h_t(\mathrm{d}s)$ is a nonnegative measure on $[0,t]$ with no mass at zero.

We easily see that
\begin{equation}\label{E:laplace_transform_at}
\mathcal{L}a(p) := \int_0^\infty \mathrm{e}^{-p t} a_t \mathrm{d}t = \int_0^\infty \mathrm{e}^{-p t} \p \left( \tau_0^- > t \right) \mathrm{d}t = \frac{1}{c \Phi(p)} .
\end{equation}
Then, by linearity, the double Laplace transform of $(t,s) \mapsto h_t(\mathrm{d}s)$ must be equal to

\begin{align}
\mathcal{L}h(p,q)&={1\over c\Phi(p)}\PAR{{c\Phi(p+q)\over p+q}-1}\notag\\
&={1\over c\Phi(p)}\PAR{-{c\theta(p+q)\over p+q}+{\lambda-c\alpha\over p+q}}\notag\\
&={1\over c\Phi(p)}\PAR{{\alpha\over\Phi(p+q)}+{\lambda-c\alpha\over p+q}} ,\label{eq:TLh}
\end{align}
where the second equality is obtained using the explicit expressions for $\Phi(\cdot)$ and $\theta(\cdot)$, while the third equality is a consequence of the following identity: $c\Phi(p+q)\theta(p+q)=-\alpha (p+q)$.

Using basic properties of Laplace transforms and Equation~\eqref{E:laplace_transform_at}, the expression of the Laplace transform of $(t,s) \mapsto h_t(\mathrm{d}s)$ implies that
\[
h_t(\mathrm{d}s)=a_{t-s}(c\alpha a_{s}+\lambda-c\alpha) \ind_{(0,t)} (s) \mathrm{d}s.
\]
Consequently, to identify the law of $\int_0^t \ind_{(-\infty,0)} (X_u) \mathrm{d}u$, we need to identify $a_t$. Instead of using one of the available expressions from the literature, we derive a new expression.

First, using the conjugate expression of square roots and the explicit form of $\Phi(p)$, we notice that
\begin{align*}
\mathcal{L}a(p)
&={\sqrt{(p+\lambda+c\alpha)^2-4c\alpha \lambda}-(p+\lambda+c\alpha)\over 2c\alpha p}+{1\over p}\\
&={-2\lambda\over p\PAR{\sqrt{(p+\lambda+c\alpha)^2-4c\alpha \lambda}+(p+\lambda+c\alpha)}}+{1\over  p}.
\end{align*}

As, for functions $f$ and $g$, $\mathcal{L}f(p)={1\over p}\int_0^p\mathcal{L}g(q)\dd q$ implies $f(t)=\int_t^\infty s^{-1}g(s)\mathrm{d}s$ and the Laplace transform of $t\mapsto I_1(kt)$ is equal to $p\mapsto k\PAR{\sqrt{p^2-k^2}+p}^{-1}\PAR{p^2-k^2}^{-1/2}$ with  $I_1$ the modified Bessel function of the first kind of order $1$ (see e.g. \cite[Supplement 4]{Polyetal2008} for more details), we deduce that, when $\lambda \geq c\alpha$, or equivalently $\e[X_1] \leq 0$,
\[
a_t=\sqrt{\lambda \over   c\alpha}\int_t^{\infty}{1\over s}\ee^{-\PAR{\lambda+c\alpha}s}I_1(2\sqrt{c\alpha \lambda}\, s) \mathrm{d}s ,
\]
and, when $\lambda< c\alpha$, or equivalently $\e[X_1] > 0$, 
\[
a_t=1-{\lambda \over c\alpha }+\sqrt{\lambda \over   c\alpha}\int_t^{\infty}{1\over s}\ee^{-\PAR{\lambda+c\alpha}s}I_1(2\sqrt{c\alpha \lambda}\, s) \mathrm{d}s .
\]

At last, the final result is obtained using Fubini's theorem and the following integral form for the modified Bessel function: for $s>0$,
\[
I_1(s) = {s\over \pi} \int_{-1}^1 \mathrm{e}^{-su} \sqrt{1-u^2} \mathrm{d}u .
\]
\end{proof}

As opposed to the Brownian case, it is more delicate to obtain the corresponding distribution under $\p_x$ for $x>0$, i.e.\ when $X^\text{CL}_0=x>0$, since $X_{\tau_0^-}<0$ a.s..
%
%

Again note that, for a fixed $t>0$, the distribution
\[
\p_x \left( \int_0^t \ind_{(-\infty,0)} (X^\text{CL}_u) \mathrm{d}u \in \mathrm{d}s \right)
\]
has a mass $a_t^x$ at $0$, which is the survival probability $\p_x \left( \tau_0^- > t \right)$.
%
\begin{theorem}\label{main_result}
For a fixed $t>0$, we have
\begin{equation}\label{E:occ-time_dist_cramer-lundberg_x}
\p_x \left( \int_0^t \ind_{(-\infty,0)} (X^\text{CL}_u) \mathrm{d}u \in \mathrm{d}s \right) = a^x_t \delta_0 (\mathrm{d}s) + \left( a_{t-s}^x+k^x_{t-s} \right) \left( \lambda - c \alpha \left( 1 - a^0_{s} \right) \right) \ind_{(0,t)} (s) \mathrm{d}s ,
\end{equation}
with
\begin{align}
a^x_t &= 1-\lambda \mathrm{e}^{-\alpha x} \int_0^t\mathrm{e}^{-(\lambda+c\alpha)s} \SBRA{I_0\PAR{2\sqrt{\lambda c\alpha} \sqrt{s\PAR{s+x/c}}}-{s\over s+x/c}I_2\PAR{2\sqrt{\lambda c\alpha}\sqrt{s\PAR{s+x/c}}}} \mathrm{d}s , \notag \\
k^x_t &= \mathrm{e}^{-\alpha x} - 1 \label{E:kxt} \\
& \qquad + x \alpha \lambda \mathrm{e}^{-\alpha x} \int_0^t s^{-1} \mathrm{e}^{-(\lambda+c\alpha)s}\SBRA{I_0\PAR{2\sqrt{c\alpha \lambda}\sqrt{s(s+x/c)}}-I_2\PAR{2\sqrt{c\alpha \lambda}\sqrt{s(s+x/c)}}}\dd s , \notag
\end{align}
where $I_\nu$ represents the modified Bessel function of the first kind of order $\nu$, which can be written as: for $s\geq 0$,
\[
I_0(s)={1\over \pi}\int_{-1}^1\mathrm{e}^{-su}\PAR{1-u^2}^{- 1/2} \mathrm{d}u \quad \text{and} \quad
I_2(s)={s^2\over 3\pi}\int_{-1}^1\mathrm{e}^{-su} \PAR{1-u^2}^{3/2} \mathrm{d}u .
\]
\end{theorem}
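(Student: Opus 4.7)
The plan is to extend the double-Laplace-transform strategy of Proposition \ref{coro:Poisson} to a positive initial capital $x>0$. Under $\p_x$, the process $X^{\text{CL}}$ must first cross zero by a downward jump before any time can be spent in $(-\infty,0)$, so the distribution of $O_t := \int_0^t \ind_{(-\infty,0)}(X^{\text{CL}}_u)\,\mathrm{d}u$ still carries a mass $a_t^x = \p_x(\tau_0^- > t)$ at the origin, and what remains to identify is its density part. For a general SNLP there is a closed-form expression for $\int_0^\infty \mathrm{e}^{-pt}\,\e_x[\mathrm{e}^{-qO_t}]\,\mathrm{d}t$ in terms of $\Phi(p)$, $\pqscale(x)$ and $\zpqscale(x)$. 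In the Cram\'er-Lundberg model with exponential claims, these scale functions admit explicit forms as linear combinations of two exponentials with exponents $\Phi(\cdot)$ and $\theta(\cdot)$ from Section 2.2. Substituting them, using the identity $c\Phi(p+q)\theta(p+q) = -\alpha(p+q)$ already employed in the proof of Proposition \ref{coro:Poisson}, and subtracting the transform of the atom at zero (computed via the classical $\e_x[\mathrm{e}^{-p\tau_0^-}\ind_{\tau_0^- < \infty}] = \mathrm{e}^{-\Phi(p)x}$) should yield a factorization
$$
\int_0^\infty\!\!\int_0^\infty \mathrm{e}^{-pt - qs}\,h_t^x(\mathrm{d}s)\,\mathrm{d}t = \bigl[\mathcal{L}a^x(p) + \mathcal{L}k^x(p)\bigr]\cdot\mathcal{L}_s\bigl[\lambda - c\alpha(1 - a_s^0)\bigr](p+q),
$$
where $h_t^x(\mathrm{d}s)$ is the density part of $\p_x(O_t \in \mathrm{d}s)$. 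The rightmost factor is exactly the one identified in the proof of Proposition \ref{coro:Poisson} (it multiplies $a_{t-s}^0$ there), so the above determines $\mathcal{L}(a^x + k^x)(p)$ and, combined with the explicit $\mathcal{L}a^x(p)$, also $\mathcal{L}k^x(p)$.

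The Laplace inversion in the time variable then mirrors the closing argument of Proposition \ref{coro:Poisson}: the conjugate-of-square-roots manipulation turns each transform into a $1/p$ factor times a Bessel-type Laplace transform, and a Kendall-type identity for spectrally negative L\'evy processes produces the shift $t \mapsto \sqrt{t(t+x/c)}$ inside the Bessel arguments, while the exponential claim density contributes the $\mathrm{e}^{-\alpha x}$ prefactor. Extending the $I_1$ Laplace transform used in the $x = 0$ proof to $I_0$ and $I_2$ via the integral representations of these Bessel functions stated in the theorem yields the claimed explicit formulas for $a_t^x$ and $k_t^x$. As a sanity check, the expressions must collapse to Proposition \ref{coro:Poisson} when $x = 0$, which they do since $k_t^0 = 0$ in that case.

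The main obstacle is that, unlike $a_t^x$, the function $k_t^x$ has no direct probabilistic interpretation, so its explicit form must be extracted purely from algebraic identification in the double Laplace transform followed by a careful inversion of a combination of $I_0$ and $I_2$ transforms with shifted arguments $\sqrt{t(t+x/c)}$. These shifted Bessel identities do not appear in standard tables and require a Fubini-based argument analogous to the one closing the proof of Proposition \ref{coro:Poisson}.
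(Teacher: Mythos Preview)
Your overall strategy---compute the double Laplace transform of the occupation time, strip off the atom at zero, factorize what remains as a product of transforms in $p$ and $p+q$, and invert---is exactly the route the paper takes. However, one concrete step is wrong and would derail the computation.

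You write that the transform of the atom is to be computed via ``the classical $\e_x[\mathrm{e}^{-p\tau_0^-}\ind_{\tau_0^- < \infty}] = \mathrm{e}^{-\Phi(p)x}$.'' This identity is false: $\mathrm{e}^{-\Phi(p)x}$ is the Laplace transform of the \emph{upward} first-passage time $\tau_x^+$ for a SNLP started at $0$, not of the downward time $\tau_0^-$ starting from $x>0$. In the Cram\'er--Lundberg model with exponential claims one has instead
\[
\e_x\!\left[\mathrm{e}^{-p\tau_0^-};\,\tau_0^-<\infty\right]=\frac{\alpha+\theta(p)}{\alpha}\,\mathrm{e}^{\theta(p)x},
\qquad\text{so that}\qquad
\mathcal{L}a^x(p)=\frac{1}{p}-\frac{\alpha+\theta(p)}{\alpha p}\,\mathrm{e}^{\theta(p)x},
\]
with $\theta(p)$ the smaller (negative) root of Lundberg's equation. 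The paper obtains this not by quoting a first-passage formula but by letting $q\to\infty$ in the full double transform (the density part vanishes in that limit, leaving only the atom). With your formula you would subtract the wrong quantity, so the residual would not factorize as claimed and the subsequent identification of $\mathcal{L}k^x(p)$ would fail. In the paper the correct factorization is
\[
\mathcal{L}h^x(p,q)=\Bigl(\frac{c\,\Phi(p+q)}{p+q}-1\Bigr)\,\frac{\mathrm{e}^{\theta(p)x}}{c\,\Phi(p)},
\qquad
\mathcal{L}k^x(p)=\frac{1}{p}\bigl(\mathrm{e}^{\theta(p)x}-1\bigr),
\]
and it is this $\mathrm{e}^{\theta(p)x}$ (not $\mathrm{e}^{-\Phi(p)x}$) that drives the shifted Bessel arguments $\sqrt{s(s+x/c)}$ upon inversion. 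Once this is corrected, your plan for the inversion step (rewrite via conjugate square roots, recognize Bessel transforms, use the integral representations of $I_0$, $I_1$, $I_2$) aligns with what the paper does, drawing on the explicit inversion identities from \cite{drekic2009}.
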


\begin{proof}
For simplicity, we will use $X$ instead of $X^\text{CL}$ in the proof. First, note that we have the following relationships between $\Phi(\cdot)$ and $\theta(\cdot)$: for $p,q \geq 0$, 
\begin{gather}
\Phi(p)-\theta(p)={1\over c}\sqrt{\Delta_p} ,\label{eq:diff} \\
\Phi(p)\theta(p)=-{\alpha p\over c}, \\
\PAR{\theta(p+q)-\Phi(p)}\PAR{\theta(p+q)-\theta(p)}={q\over c}\PAR{\theta(p+q)+\alpha} . \label{eq:prodTheta}
\end{gather}

From \cite[Corollary 1]{guerin_renaud_2015}, for $x \in \mathbb{R}$ and $p,q>0$, we deduce an explicit expression for the double Laplace transform of the occupation time of the negative half-line
\begin{multline*}
\int_0^\infty \mathrm{e}^{-p t} \e_x \left[ \mathrm{e}^{- q \int_0^{t} \ind_{(-\infty,0)} (X_s) \mathrm{d}s } \right] \mathrm{d}t \\
=\int_{-\infty}^{\infty}\BRA{\left( \frac{\Phi(p+q)-\Phi(p)}{q} \right) \cH^\PAR{p+q,-q}(x) \cH^\PAR{p,q}(-y) - \mathcal W_{x}^{(p,q)} (x-y)} \dy ,
\end{multline*}
where, in the case of a Cram\'er-Lundberg risk process with exponential claims (see e.g.\ \cite{guerin_renaud_2015} for more details), for $x\geq 0$,
$$
\mathcal{H}^\PAR{p,q}(x) = {q\over \sqrt{\Delta_{p+q}}}\SBRA{{\alpha+\Phi(p+q)\over \Phi(p+q)-\Phi(p)}\mathrm{e}^{\Phi(p+q)x}-{\alpha+\theta(p+q)\over \theta(p+q)-\Phi(p)}\mathrm{e}^{\theta(p+q) x}}
$$
and
\[
\cH^\PAR{p+q,-q}(x)=
{q\over \sqrt{\Delta_{p}}}\PAR{{\alpha+\Phi(p)\over \Phi(p+q)-\Phi(p)}\mathrm{e}^{\Phi(p)x}-{\alpha+\theta(p)\over \Phi(p+q)-\theta(p)}\mathrm{e}^{\theta(p) x}} ,
\]
and where, for $x<0$, $\mathcal{H}^\PAR{p,q}(x) = \exp\PAR{\Phi(p) x}$ and $\mathcal{H}^\PAR{p+q,-q}(x) = \exp\PAR{\Phi(p+q) x}$. Also, for $x\geq a$, 
\begin{align*}
\wapq(x) &= q {\alpha+\Phi(p+q)\over \sqrt{\Delta_{p+q}\Delta_p}}\ee^{\Phi(p+q)(x-a)}\SBRA{{\alpha+\Phi(p)\over \Phi(p+q)-\Phi(p)}\ee^{\Phi(p)a}-{\alpha+\theta(p)\over \Phi(p+q)-\theta(p)}\ee^{\theta(p)a}}\\
&\hskip 0.5cm -q{\alpha+\theta(p+q)\over \sqrt{\Delta_{p+q}\Delta_p}}\ee^{\theta(p+q)(x-a)}\SBRA{{\alpha+\Phi(p)\over \theta(p+q)-\Phi(p)}\ee^{\Phi(p)a}-{\alpha+\theta(p)\over \theta(p+q)-\theta(p)}\ee^{\theta(p)a}} ,
\end{align*}
for $x \in [0,a]$, $\wapq(x) = \pscale(a)$, and finally, for $x<0$, $\wapq(x) =  0$. Therefore, for $y<0$, we have
\begin{multline*}
\mathcal W_{x}^{(p,q)} (x-y)
 ={\alpha+\Phi(p+q)\over \sqrt{\Delta_{p+q}}}\cH^\PAR{p+q,-q}(x) \mathrm{e}^{-\Phi(p+q)y}\\
 -q{\alpha+\theta(p+q)\over \sqrt{\Delta_{p+q}\Delta_p}}\SBRA{{\alpha+\Phi(p)\over \theta(p+q)-\Phi(p)}\ee^{\Phi(p)x}-{\alpha+\theta(p)\over \theta(p+q)-\theta(p)}\ee^{\theta(p)x}}\ee^{-\theta(p+q)y}.
\end{multline*}

We divide the computation of the integral into three parts. We first consider the integral on the negative half-line. For $y<0$, using~\eqref{eq:prodTheta} and~\eqref{eq:diff}, we have
\begin{align*}
&\left( \frac{\Phi(p+q)-\Phi(p)}{q} \right) \cH^\PAR{p+q,-q}(x) \cH^\PAR{p,q}(-y) - \mathcal W_{x}^{(p,q)} (x-y) 
\\
 &={\alpha+\theta(p+q)\over \sqrt{\Delta_{p+q}}}\left[-{\Phi(p+q)-\Phi(p)\over \theta(p+q)-\Phi(p)}\cH^\PAR{p+q,-q}(x) \right.\\
&\hskip 3cm \left.+{q\over \sqrt{\Delta_p}}\PAR{{\alpha+\Phi(p)\over \theta(p+q)-\Phi(p)}\ee^{\Phi(p)x}-{\alpha+\theta(p)\over \theta(p+q)-\theta(p)}\ee^{\theta(p)x}}\right]\mathrm{e}^{-\theta(p+q) y} \\
&={c\PAR{\alpha+\theta(p)}\over \sqrt{\Delta_p\Delta_{p+q}}}\left[\PAR{\Phi(p+q)-\Phi(p)}{\PAR{\theta(p+q)-\theta(p)}\over \Phi(p+q)-\theta(p)}-\PAR{\theta(p+q)-\Phi(p)}\right]\mathrm{e}^{\theta(p) x}\mathrm{e}^{-\theta(p+q) y}  \\
&={\alpha+\theta(p)\over c\PAR{ \Phi(p+q)-\theta(p)}}\mathrm{e}^{\theta(p) x} \mathrm{e}^{-\theta(p+q) y}.
\end{align*}
Consequently,
\begin{multline}\label{eq:intneg}
\int_{-\infty}^{0}\BRA{\left( \frac{\Phi(p+q)-\Phi(p)}{q} \right) \cH^\PAR{p+q,-q}(x) \cH^\PAR{p,q}(-y) - \mathcal W_{x}^{(p,q)} (x-y)} \dy\\
=-{\alpha+\theta(p)\over c\theta(p+q)\PAR{ \Phi(p+q)-\theta(p)}}\mathrm{e}^{\theta(p) x}.
\end{multline}
We now compute the integral on the positive half-line for each term. We have
\begin{multline}\label{eq:intposH}
\int_{0}^{\infty}\left( \frac{\Phi(p+q)-\Phi(p)}{q} \right) \cH^\PAR{p+q,-q}(x) \cH^\PAR{p,q}(-y) \dy\\
=\frac{\Phi(p+q)-\Phi(p)}{q}  \cH^\PAR{p+q,-q}(x)\int_{0}^{\infty} \mathrm{e}^{-\Phi(p) y} \dy
= \frac{\Phi(p+q)-\Phi(p)}{q\Phi(p)}  \cH^\PAR{p+q,-q}(x).
\end{multline}
At last, we notice that $x-y<x$ when $y>0$ and then $\mathcal W_x^{(p,q)}(x-y)=\pscale(x-y)$. Consequently,
\begin{equation}\label{eq:intposW}
-\int_{0}^{\infty}\mathcal  W_{x}^{(p,q)}(y) \dy
=-\int_{0}^{x}\pscale (y) \dy=-{\alpha+\Phi(p)\over\Phi(p) \sqrt{\Delta_p}}\mathrm{e}^{\Phi(p)x}+{\alpha+\theta(p)\over \theta(p)\sqrt{\Delta_p}}\mathrm{e}^{\theta(p) x}-{\alpha\over c\Phi(p)  \theta(p)}.
 \end{equation}
At last, summing up the three integrals in~\eqref{eq:intneg}, \eqref{eq:intposH} and \eqref{eq:intposW}, we obtain
\begin{multline}\label{eq:LTruine}
\int_0^\infty \mathrm{e}^{-p t} \e_x \left[ \mathrm{e}^{- q \int_0^{t} \ind_{(-\infty,0)} (X_s) \mathrm{d}s } \right] \mathrm{d}t =\\
{\PAR{\alpha+\theta(p)}\Phi(p+q)\over c\PAR{\Phi(p+q)-\theta(p)}}\SBRA{
{1\over  \Phi(p)\theta(p)}-{1\over \Phi(p+q)\theta(p+q)}}\mathrm{e}^{\theta(p) x}
-{\alpha\over c\Phi(p)  \theta(p)}.
 \end{multline}
 
The rest of the proof is similar to the proof of Proposition~\ref{coro:Poisson}, i.e.\ we want to invert the double Laplace transform in~\eqref{eq:LTruine}. Fix $p>0$. Since when $q\rightarrow \infty$ we have $\Phi(p+q)\rightarrow \infty$ and $\theta(p+q)\rightarrow -\alpha$, then the previous quantity converges to
\begin{equation}\label{eq:LTa}
{\alpha+\theta(p)\over c  \Phi(p)\theta(p)}
\mathrm{e}^{\theta(p) x}
-{\alpha\over c\Phi(p)  \theta(p)}=-{\alpha+\theta(p)\over \alpha p}\mathrm{e}^{\theta(p) x}+{1\over p}.
\end{equation}
Since the distribution of $ \int_0^t \ind_{(-\infty,0)} (X_u) \mathrm{d}u$ has a mass at $0$,
\[
\p_x \left( \int_0^t \ind_{(-\infty,0)} (X_u) \mathrm{d}u \in \mathrm{d}s \right) = a^x_t\delta_0(\mathrm{d}s)+h^x_t(\mathrm{d}s) ,
\]
where $a^x_t=\dP_x(\tau_0^->t)\in[0,1]$ and $h^x_t(\mathrm{d}s)$ is a nonnegative measure on $[0,t]$ with no mass at zero. From \eqref{eq:LTa}, we deduce that the Laplace transform of $t\mapsto a^x_t$ is equal to ${1\over p}-{\alpha+\theta(p)\over \alpha p}
\mathrm{e}^{\theta(p) x}
$ 
and then, from~\eqref{eq:LTruine}, the Laplace transform of $(s,t)\mapsto h^x_t(\dd s)$ is
\[
\mathcal{L}h^x (p,q)={\PAR{\alpha+\theta(p)}\Phi(p+q)\over c\PAR{\Phi(p+q)-\theta(p)}}\SBRA{
{1\over  \Phi(p)\theta(p)}-{1\over \Phi(p+q)\theta(p+q)}}\mathrm{e}^{\theta(p) x}
-{\alpha+\theta(p)\over c  \Phi(p)\theta(p)}\mathrm{e}^{\theta(p) x}.
\]
Taking $x=0$ in~\eqref{eq:LTruine} and comparing with~\eqref{eq:LTPoisson}, we deduce that  the Laplace transform of $(t,s)\mapsto h^x_t(\mathrm{d} s)$ is equal to
\begin{equation}\label{eq:h}
\mathcal{L}h^x (p,q)
=\PAR{{c   \Phi(p+q)\over p+q}-1}\,{\mathrm{e}^{\theta(p) x}\over c \Phi(p)}.
\end{equation}
Therefore, to identify the law of $\int_0^t \ind_{(-\infty,0)} (X_u) \mathrm{d}u$, we need to identify
$$
a^x_t = \p_x \left( \tau_0^- > t \right) = 1 - \int_0^t \p_x \left( \tau_0^- \in \mathrm{d}s \right) .
$$

Using the explicit expression for $\p_x \left( \tau_0^- \in \mathrm{d}s \right)$ given in \cite[Equation (D.17)]{drekic2009},  we deduce that
\[
a^x_t=1-\lambda \mathrm{e}^{-\alpha x}\int_0^t\mathrm{e}^{-(\lambda+c\alpha)s}\SBRA{I_0\PAR{2\sqrt{\lambda c\alpha}\sqrt{s\PAR{s+x/c}}}-{s\over s+x/c}I_2\PAR{2\sqrt{\lambda c\alpha}\sqrt{s\PAR{s+x/c}}}} \mathrm{d}s ,
\] 
where $I_\nu$ represents the modified Bessel function of the first kind of order $\nu$, which can be written as
\[
I_0(s)={1\over \pi}\int_{-1}^1\mathrm{e}^{-su}\PAR{1-u^2}^{-1/2} \mathrm{d}u \quad \text{and} \quad I_2(s)={s^2\over 3\pi}\int_{-1}^1\mathrm{e}^{-su} \PAR{1-u^2}^{3/2} \mathrm{d}u.
\]
Moreover, using~\eqref{eq:diff}, the Laplace transform of $h^x$ can be written in terms of the Laplace transform of $a^x$:
%
%
%
%
\[\mathcal{L}h^x(p,q)=\SBRA{{c   \Phi(p+q)\over p+q}-1}\, \SBRA{\mathcal{L}a^x(p)+{1\over p}\PAR{\mathrm{e}^{\theta(p)x}-1}}.
\] 
We then deduce that (from the proof of Proposition~\ref{coro:Poisson})
\[
h^x_t(\mathrm{d} s) = c \alpha \left( a^0_{s} + {\lambda \over c\alpha}-1 \right) \left( a_{t-s}^x+k^x_{t-s} \right) \ind_{(0,t)} (s) \mathrm{d}s ,
\]
with $\mathcal{L}k^x(p)={1\over p}\PAR{\mathrm{e}^{\theta(p)x} -1}$. As, for functions $f$ and $g$, $\mathcal{L}f(p)={1\over p}\int_p^\infty\mathcal{L}g(q)\dd q$ implies $f(t)=\int_0^t s^{-1}g(s)\mathrm{d}s$,  we have $k^x_t=\int_0^t s^{-1} m^x_s\dd s + \mathrm{e}^{-\alpha x}-1 $, with $\mathcal{L}m^x(p) = -x \theta^\prime (p) \mathrm{e}^{\theta(p)x}$.
Let us now explicit the function $m^x_t$. We have
\begin{align*}
\mathcal{L}m^x (p) &= -{x\over 2c}\PAR{1-{p+\lambda+c\alpha \over \sqrt{(p+\lambda+c\alpha)^2-4c\alpha \lambda}}}\mathrm{e}^{{x\over 2c}\PAR{p+\lambda-c\alpha-\sqrt{(p+\lambda+c\alpha)^2-4c\alpha \lambda}}}\\
&=-{x\mathrm{e}^{-x\alpha}\over 2c}\PAR{ \sqrt{(p+\lambda+c\alpha)^2-4c\alpha \lambda}-\PAR{p+\lambda+c\alpha}} {\mathrm{e}^{-{x\over 2c}\PAR{\sqrt{(p+\lambda+c\alpha)^2-4c\alpha \lambda}-(p+\lambda+c\alpha)}}\over \sqrt{(p+\lambda+c\alpha)^2-4c\alpha \lambda}} .
\end{align*}
Using similar inversion techniques as in \cite[(D.13)-(D.15)]{drekic2009}, the inverse Laplace transform of the previous expression can be expressed in terms of the modified Bessel function $I_1$:
\[
m^x_t  = {x\mathrm{e}^{-x\alpha}\over 2c}\mathrm{e}^{-(\lambda+c\alpha)t}{2\sqrt{c\alpha \lambda}\over \sqrt{t(t+x/c)}}I_1\PAR{2\sqrt{c\alpha \lambda}\sqrt{t(t+x/c)}}.
\]
Since $I_1(z)=z\PAR{I_0(z)-I_2(z)}$, we have
\[
m^x_t  = x\alpha \lambda \mathrm{e}^{-x\alpha}\mathrm{e}^{-(\lambda+c\alpha)t}\SBRA{I_0\PAR{2\sqrt{c\alpha \lambda}\sqrt{t(t+x/c)}}-I_2\PAR{2\sqrt{c\alpha \lambda}\sqrt{t(t+x/c)}}},
\]
and then
\begin{multline*}
k^x_t = \mathrm{e}^{-\alpha x} - 1 \label{E:kxt} \\
+ x \alpha \lambda \mathrm{e}^{-\alpha x} \int_0^t s^{-1} \mathrm{e}^{-(\lambda+c\alpha)s}\SBRA{I_0\PAR{2\sqrt{c\alpha \lambda}\sqrt{s(s+x/c)}}-I_2\PAR{2\sqrt{c\alpha \lambda}\sqrt{s(s+x/c)}}}\dd s .
\end{multline*}

\end{proof}

If we set $x=0$ in the definition of $k^x_t$ in~\eqref{E:kxt}, then clearly $k_t^0=0$. Therefore, Theorem~\ref{main_result} is a generalization of Proposition~\ref{coro:Poisson}.

\subsection{Relationships with other definitions of ruin}

When $r \to 0$, we should intuitively recover the classical probability of finite-time ruin. Indeed, when $r$ is very small, then almost no grace period is granted when the surplus goes below zero.

First, for $x \geq 0$ and $t>0$, we have for every $r>0$ that
$$
\left\lbrace \int_0^t \ind_{(-\infty,0)} (X_s) \mathrm{d}s > r \right\rbrace \subset \left\lbrace \tau_0^- \leq t \right\rbrace
$$
so
$$
\p_x \left( \cumparisiantime \leq t \right) \leq \p_x \left( \tau_0^- \leq t \right) .
$$
Therefore, as a risk measure, the probability of cumulative Parisian ruin is less \textit{conservative} than the probability of ruin.

We can obtain a much stronger relationship between cumulative Parisian ruin and classical ruin in the full generality of a L\'evy insurance risk model. Recall that a L\'evy insurance risk process is a process $X = \{X_t, t \geq 0\}$ with stationary and independent increments and no positive jumps.
%
%

\begin{prop}
Assume $X$ is a L\'evy insurance risk process. For all $x\geq 0$, the time of cumulative Parisian ruin $\cumparisiantime$ converges $\p_x$-almost surely, as $r \to 0$, to the time of classical ruin $\tau_0^-$.
\end{prop}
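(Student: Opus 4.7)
My plan is to set $O_t = \int_0^t \ind_{(-\infty,0)}(X_s)\,\mathrm{d}s$, so that $\cumparisiantime = \inf\{t > 0 : O_t > r\}$ is non-decreasing in $r$ and hence admits a $\p_x$-almost sure limit $\sigma_{0+} := \lim_{r \downarrow 0} \cumparisiantime$. The goal then reduces to the two-sided bound $\sigma_{0+} = \tau_0^-$.

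The inequality $\cumparisiantime \geq \tau_0^-$ is immediate: for any $t < \tau_0^-$ the path satisfies $X_s \geq 0$ on $[0,t]$, so $O_t = 0 \leq r$ and therefore $\cumparisiantime \geq t$. Letting $t \uparrow \tau_0^-$ and then $r \downarrow 0$ yields $\sigma_{0+} \geq \tau_0^-$. For the reverse inequality it suffices, on the event $\{\tau_0^- < \infty\}$ (the complement being trivial), to show that $O_{\tau_0^- + \varepsilon} > 0$ almost surely for every $\varepsilon > 0$: once this is established, choosing any $r < O_{\tau_0^- + \varepsilon}$ forces $\cumparisiantime \leq \tau_0^- + \varepsilon$, and letting $\varepsilon \downarrow 0$ gives $\sigma_{0+} \leq \tau_0^-$.

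To prove $O_{\tau_0^- + \varepsilon} > 0$ I would split on the value of $X_{\tau_0^-}$. If $X_{\tau_0^-} < 0$, right-continuity of the c\`adl\`ag path delivers a random $\delta > 0$ with $X_s < 0$ on $[\tau_0^-, \tau_0^- + \delta)$, so that $O_{\tau_0^- + \varepsilon} \geq \min(\delta, \varepsilon) > 0$. If $X_{\tau_0^-} \geq 0$, then $\tau_0^-$ does not attain the infimum in its defining set, so by the very definition of infimum there exist times $s_n \downarrow \tau_0^-$ with $X_{s_n} < 0$; picking a single $s_n \in (\tau_0^-, \tau_0^- + \varepsilon)$ and applying right-continuity at it produces a subinterval $[s_n, s_n + \delta)$, with $\delta > 0$, on which $X$ is strictly negative, and the intersection of this subinterval with $(\tau_0^-, \tau_0^- + \varepsilon)$ contributes positive Lebesgue measure to $O_{\tau_0^- + \varepsilon}$.

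The main subtlety I expect lies in the second case, but it dissolves upon noting that no regularity of $0$ for $(-\infty,0)$ need be invoked: the c\`adl\`ag property of $X$ together with the non-attainment of the infimum already supplies an excursion below zero of arbitrarily small starting time after $\tau_0^-$, and that single excursion suffices to push $O_{\tau_0^- + \varepsilon}$ strictly above zero. Notice also that the argument never uses the Lévy structure beyond the existence of c\`adl\`ag paths, so the conclusion in fact extends to any c\`adl\`ag process on which $\tau_0^-$ and $\cumparisiantime$ are defined analogously.
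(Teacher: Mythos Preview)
Your argument is correct, and it follows a genuinely different route from the paper's. The paper first establishes convergence in probability: it uses the strong Markov property at $\tau_0^-$ to rewrite $\p_x(|\cumparisiantime-\tau_0^-|>\epsilon)$ as $\e_x\big[\p_{X_{\tau_0^-}}\big(\int_0^\epsilon \ind_{(-\infty,0)}(X_s)\,\mathrm{d}s\leq r\big)\big]$, and then disposes of the limit $r\to 0$ by splitting on $\{X_{\tau_0^-}<0\}$ (handled via the skip-free upward property) and $\{X_{\tau_0^-}=0\}$ (handled via the fact that this event has positive probability only when $X$ has a Brownian component, in which case $0$ is regular for $(-\infty,0)$). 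Almost-sure convergence is then deduced from monotonicity of $r\mapsto\cumparisiantime$.

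Your approach is purely pathwise and bypasses both the strong Markov property and the regularity theory for L\'evy processes: the key observation that, when $X_{\tau_0^-}\geq 0$, the infimum defining $\tau_0^-$ is not attained and hence there exist $s_n\downarrow\tau_0^-$ with $X_{s_n}<0$, together with right-continuity at any such $s_n$, already yields $O_{\tau_0^-+\epsilon}>0$. This is more elementary and, as you note, more general---it applies to any c\`adl\`ag process, whereas the paper's argument leans on the specific dichotomy for spectrally negative L\'evy processes. The paper's route, on the other hand, makes the probabilistic mechanism (regularity of $0$) more visible.
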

\begin{proof}
First, we show convergence in probability (with respect to $\p_x$, for an arbitrary value of $x\geq0$), that is, for every $\epsilon>0$,
$$
\p_x \left( \left\vert \cumparisiantime - \tau_0^- \right\vert > \epsilon \right) \underset{r \to 0}{\longrightarrow} 0.
$$
Indeed, for any $\varepsilon>0$, we note that $\sigma_r\geq \tau_0^-$ and, by the strong Markov property,
\begin{equation*}
\dP_x\PAR{\ABS{\cumparisiantime-\tau_0^-}>\epsilon}=\dP_x\PAR{\int_{\tau_0^-}^{\epsilon+\tau_0^-} \ind_{(-\infty,0)} (X_s) \mathrm{d}s\leq r}=\dE_x\SBRA{\dP_{X_{\tau_0^-}}\PAR{\int_0^\varepsilon \ind_{(-\infty,0)} (X_s) \mathrm{d}s\leq r}} .
\end{equation*}
Therefore we want to show that $\dE_x\SBRA{\dP_{X_{\tau_0^-}}\PAR{\int_0^\varepsilon \ind_{(-\infty,0)} (X_s) \mathrm{d}s = 0}}=0$. Clearly, on $\{X_{\tau_0^-}<0\}$, since the process is skip-free upward,
$$
\dP_{X_{\tau_0^-}} \PAR{\int_0^\varepsilon \ind_{(-\infty,0)} (X_s) \mathrm{d}s = 0} = 0 .
$$
On the other hand, if $x>0$, it is well known that $\p_x \left( X_{\tau_0^-}=0 \right)$ if and only if $X$ has a Brownian component. Finally, recall that a SNLP has paths of unbounded variation (e.g.\ with a Brownian component) if and only if $0$ is regular for $(-\infty,0)$, which means that when the process sits at zero it will enter $(-\infty,0)$ immediately (and will start to accumulate occupation time). Putting the pieces together concludes the proof of convergence in probability.
%

Since $r \mapsto \cumparisiantime$ is a non-decreasing function, we deduce that $\cumparisiantime$ converges to $\tau_0^-$ a.s.\ when $r$ goes to $0$.
\end{proof}


At first sight, it is very tempting to think of exponential Parisian ruin as being \textit{one Laplace transform away} from deterministic Parisian ruin. As discussed in \cite{loeffenetal2013}, due to the choice of different exponential clocks for different excursions below zero, this is not the case. In fact, it turns out that cumulative Parisian ruin is the notion of Parisian ruin that can be connected to exponential Parisian ruin through a Laplace transform, as the next result will show.

Note that the next result holds for any L\'evy insurance risk process, including of course the Cram\'er-Lundberg risk process and the Brownian risk process studied previously.
\begin{prop}
Assume $X$ is a L\'evy insurance risk process and let $\mathbf{e}_q$ be an independent exponential random variable with rate $q>0$. Then, for all initial surplus $x\geq 0$, $\kappa_q$ and $\sigma_{\mathbf{e}_q}$ are such that, for all $x\geq 0$, $t>0$,
$$
\p_x \left( \parisiantime \leq t \right) = \p_x \left( \sigma_{\mathbf{e}_q} \leq t \right)
$$
and, moreover,
$$
\p_x \left( \parisiantime < \infty \right) = \p_x \left( \sigma_{\mathbf{e}_q} < \infty \right) .
$$
\end{prop}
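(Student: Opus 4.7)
My strategy is to compute the complementary probabilities $\p_x(\sigma_{\mathbf{e}_q} > t)$ and $\p_x(\kappa_q > t)$ separately and show that both coincide with the Laplace transform (in $q$) of the occupation time of the negative half-line up to time $t$. The infinite-horizon statement will then follow by monotone convergence.

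\textbf{Step 1: The cumulative side.} By conditioning on the path of $X$ and using the tail of the exponential random variable $\mathbf{e}_q$ (independent of $X$),
\begin{equation*}
\p_x(\sigma_{\mathbf{e}_q} > t) = \p_x\!\left(\int_0^t \ind_{(-\infty,0)}(X_s)\,\mathrm{d}s \leq \mathbf{e}_q\right) = \e_x\!\left[\exp\!\left(-q \int_0^t \ind_{(-\infty,0)}(X_s)\,\mathrm{d}s\right)\right].
\end{equation*}

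\textbf{Step 2: The exponential Parisian side.} Enumerate the excursions of $X$ below zero that start in $[0,t]$ by their left endpoints $g^{(1)} < g^{(2)} < \cdots$, with associated (full) durations $d_1, d_2, \ldots$ and i.i.d.\ exponential clocks $\mathbf{e}_q^{(i)}$ of rate $q$, independent of $X$. The event $\{\kappa_q > t\}$ is precisely the event that, on every excursion below zero observed by time $t$, its clock has not yet elapsed; that is, for each completed excursion $i$ (with $g^{(i)} + d_i \leq t$) one has $d_i \leq \mathbf{e}_q^{(i)}$, and if at time $t$ the process is in the middle of an excursion (indexed $i^\ast$) then $t - g^{(i^\ast)} \leq \mathbf{e}_q^{(i^\ast)}$. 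Conditioning on the whole path of $X$ and using independence of the clocks,
\begin{equation*}
\p_x\!\left(\kappa_q > t \,\big|\, X\right) = \prod_{\text{completed } i} e^{-q d_i} \;\cdot\; e^{-q(t - g^{(i^\ast)})}\ind_{\{X_t < 0\}} \;+\; \prod_{\text{completed } i} e^{-q d_i}\,\ind_{\{X_t\geq 0\}}.
\end{equation*}
The key observation is that the sum of all completed durations plus the residual time $t - g^{(i^\ast)}$ (when relevant) is exactly $\int_0^t \ind_{(-\infty,0)}(X_s)\,\mathrm{d}s$, so the displayed product reduces in either case to
\begin{equation*}
\p_x\!\left(\kappa_q > t \,\big|\, X\right) = \exp\!\left(-q \int_0^t \ind_{(-\infty,0)}(X_s)\,\mathrm{d}s\right).
\end{equation*}
Taking expectation gives $\p_x(\kappa_q > t) = \e_x\bigl[\exp(-q \int_0^t \ind_{(-\infty,0)}(X_s)\,\mathrm{d}s)\bigr]$, matching Step 1.

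\textbf{Step 3: Infinite horizon.} Both $\{\kappa_q \leq t\}$ and $\{\sigma_{\mathbf{e}_q}\leq t\}$ are increasing in $t$, so letting $t\to\infty$ and using monotone convergence of the events yields $\p_x(\kappa_q < \infty) = \p_x(\sigma_{\mathbf{e}_q} < \infty)$.

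\textbf{Main obstacle.} The delicate point is Step 2: I need to justify rigorously that the clocks $\mathbf{e}_q^{(i)}$ attached to distinct excursions may be treated as i.i.d.\ and independent of $X$, and that the memorylessness survives when one excursion is only partially observed at time $t$. This is where the marking of the Poisson point process of excursions (as described in the excerpt, following Baurdoux et al.) enters: it guarantees that conditionally on $X$ the clocks form an independent family, so the conditional probability factorises as a product of survival probabilities of exponentials, each evaluated at the observed elapsed time of the corresponding excursion. Once that independence/marking is invoked cleanly, the algebraic identity linking the product to the occupation time is immediate.
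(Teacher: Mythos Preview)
Your proof is correct and takes a genuinely different route from the paper's. The paper proceeds indirectly: it invokes two external identities (Corollary~1 of \cite{guerin_renaud_2015} and Corollary~1.1 of \cite{baurdoux_et_al_2015}) to match, for every $y\le 0$, the Laplace transform $\e_x[\ee^{-\theta\kappa_q};X_{\kappa_q}\in\dd y,\kappa_q<\infty]$ with $q\int_0^\infty\ee^{-\theta t}\e_x[\ee^{-q\int_0^t\ind_{(-\infty,0)}(X_s)\dd s};X_t\in\dd y]\,\dd t$, integrates out $y$, inverts in $\theta$, and then integrates over $(0,t)$ using the elementary identity $\int_0^t q\ind_{(-\infty,0)}(X_s)\ee^{-q\int_0^s\ind_{(-\infty,0)}(X_u)\dd u}\dd s = 1-\ee^{-q\int_0^t\ind_{(-\infty,0)}(X_s)\dd s}$ to arrive at the same expression you obtain in Step~1. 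Your argument instead conditions on the path of $X$ and exploits the marking construction directly, which is more transparent and self-contained: it explains \emph{why} the two ruin times share a distribution rather than verifying it a posteriori by matching transforms, and it avoids any appeal to the fluctuation identities in those references. One minor technical point: your enumeration $g^{(1)}<g^{(2)}<\cdots$ presupposes that the excursion left-endpoints are well-ordered, which fails for processes of unbounded variation (the excursions below zero accumulate). This is cosmetic, however---the product over the countable family of excursions and the identity $\sum_i d_i=\int_0^t\ind_{(-\infty,0)}(X_s)\,\dd s$ hold regardless of any ordering, so the argument goes through unchanged once you index the excursions by an arbitrary countable set.
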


\begin{proof}
Comparing \cite[Corollary 1]{guerin_renaud_2015} and \cite[Corollary 1.1]{baurdoux_et_al_2015}, we observe that,
%
%
for all $y \in (-\infty,0]$,
$$
q \int_0^\infty \mathrm{e}^{-\theta t} \e_x \left[ \mathrm{e}^{- q \int_0^{t} \ind_{(-\infty,0)} (X_s) \mathrm{d}s } ; X_t \in \mathrm{d}y \right] \mathrm{d}t = \e_x \left[ \mathrm{e}^{-\theta \parisiantime}, X_{\parisiantime} \in \mathrm{d}y , \parisiantime < \infty \right] .
$$
Using Fubini's theorem and then integrating over $(-\infty,0]$ (with respect to $y$) both distributions, we get
$$
q \int_0^\infty \mathrm{e}^{-\theta t} \e_x \left[ \mathrm{e}^{- q \int_0^{t} \ind_{(-\infty,0)} (X_s) \mathrm{d}s } ; X_t < 0 \right] \mathrm{d}t = \e_x \left[ \mathrm{e}^{-\theta \parisiantime}, \parisiantime < \infty \right] .
$$
Note that we can write
\begin{align*}
\e_x \left[ \mathrm{e}^{-\theta \parisiantime}, \parisiantime < \infty \right]
&= \int_0^\infty \mathrm{e}^{-\theta t} \p_x \left( \parisiantime \in \mathrm{d}t \right) .
\end{align*}
Therefore, by Laplace inversion, we have, on $(0,\infty)$,
$$
q \e_x \left[ \mathrm{e}^{- q \int_0^{t} \ind_{(-\infty,0)} (X_s) \mathrm{d}s } ; X_t < 0 \right] \mathrm{d}t = \p_x \left( \parisiantime \in \mathrm{d}t \right) .
$$

Integrating over $(0,t)$ for an arbitrary value of $t$ yields
\begin{align*}
\p_x \left( \parisiantime \leq t \right) &= \int_0^t q \e_x \left[ \mathrm{e}^{- q \int_0^{s} \ind_{(-\infty,0)} (X_u) \mathrm{d}u } ; X_s < 0 \right] \mathrm{d}s \\
&= \e_x \left[ \int_0^t q \ind_{(-\infty,0)} (X_s) \mathrm{e}^{- q \int_0^{s} \ind_{(-\infty,0)} (X_u) \mathrm{d}u } \mathrm{d}s \right] \\
&= \e_x \left[ 1 - \mathrm{e}^{- q \int_0^{t} \ind_{(-\infty,0)} (X_s) \mathrm{d}s } \right] \\
&= 1 - \e_x \left[ \mathrm{e}^{- q \int_0^{t} \ind_{(-\infty,0)} (X_s) \mathrm{d}s } \right] .
\end{align*}
So, we have
\begin{equation}\label{E:probaexpparisianruinfinite}
\p_x \left( \parisiantime \leq t \right) = 1 - \p_x \left( \int_0^{t} \ind_{(-\infty,0)} (X_s) \mathrm{d}s \leq \mathbf{e}_q \right) = \p_x \left( \sigma_{\mathbf{e}_q} \leq t \right) ,
\end{equation}
and, in particular,
\begin{equation}\label{E:probaexpparisianruin}
\p_x \left( \parisiantime < \infty \right) = 1 - \e_x \left[ \mathrm{e}^{- q \int_0^{\infty} \ind_{(-\infty,0)} (X_s) \mathrm{d}s } \right] = \p_x \left( \sigma_{\mathbf{e}_q} < \infty \right) .
\end{equation}
\end{proof}

\begin{remark}
The previous proof highlights a strong relationship between the distribution of the occupation-time process and exponential Parisian ruin, namely identity~\eqref{E:probaexpparisianruinfinite}. The equality in Equation~\eqref{E:probaexpparisianruin} was first discussed in \cite{landriaultetal2011}; note that Equation~\eqref{E:probaexpparisianruinfinite} is a finite-time version of it.
\end{remark}

\section{Acknowledgements}

J.-F. Renaud was visiting IRMAR at Universit\'e de Rennes 1 when part of this work was carried out and he is grateful for their hospitality.

Funding in support of this work was provided by the Natural Sciences and Engineering Research Council of Canada (NSERC).

\bibliographystyle{abbrv}
\bibliography{../occupation}

\end{document}